\newtheorem{theorem}{Theorem}[section]
\newtheorem{prop}{Proposition}[section]
\newcommand{\no}{\noindent}
{\rm} 
\begin{document}

\author{S. Molinaro, M. Hlynka and S. Xu\\  
Department of Mathematics and Statistics,\\
 University of Windsor, Windsor, Ontario, Canada N9B 3P4}
\date{August 3, 2011}
\title{Two Unordered Queues}

\maketitle
\no
\begin{abstract}
A  special customer must complete service from two servers in series, in either order, each with an $M/M/1$ queueing system. It is assumed that the two queueing system lengths are independent with initial numbers of customers $a$ and $b$ at the instant when the special customer arrives. We find the expected total time (ETT) for the special customer to complete service. We show that even if the interarrival and service time parameters of two queues are identical, there exist examples (specific values of the parameters and initial lengths) for which the special customer surprisingly has a lower expected total time to completion by joining the longer queue first rather than the shorter one. 
\end{abstract}

\no
{\bf e-mail: } molinaro@stececile.ca, hlynka@uwindsor.ca, xul@uwindsor.ca  \\
{\bf AMS Subject Classification: } 60K25, 68M20, 90B22\\
{\bf Keywords: } queueing, Markovian, transient, series\\
\newpage

\renewcommand{\baselinestretch}{2}
\section{\bf Introduction}

There is a great deal of literature on queues in series. One of the most famous articles is by Jackson (1957)\cite{jackson}. Issues of the ordering of the queues appear in works such as Weber (1979)\cite{weber}, Whitt (1985)\cite{whitt}, Anantharam (1987)\cite{anantharam}. Our model assumptions and goals differ from these articles. 

We begin with assumptions and the goal of this work. A special customer must receive service from two independent M/M/1 (Markovian) queueing systems with the same arrival and service rates $\lambda$ and $\mu$. 
At time 0, queueing system 1 has $a$ customers (including the customer in service) and queueing system 2 has $b$ customers. Given a choice, which queue should the special customer join first, in order to minimize its expected total system time (sum of two system times)?  Customers are served in a FCFS order for each server. No reneging (switching lines) is allowed. 
Letting $S1$ and $S2$ represent the servers, the special customer must choose between: 
\begin{align*}
\rightarrow  &* * * * S1      &\rightarrow  & ? ? ? ? ? ? ? S2\text{ or }\\
\rightarrow  &* * * * * * * S2&\rightarrow  & ? ? ? ?  S1
\end{align*}
Here ``*'' represents a customer at time 0 (and the numbers $a$ and $b$ are known to the special customer). ``?'' represents a customer in the second system after the first queueing system is completed. The number of customers after the first queueing system is completed is a random variable and so we use ``?'' to indicate that uncertainty.

The concept of a special customer as discussed here is not unreasonable.
Suppose I need to renew a document for my car and my friend also needs the same. My friend asks me to perform both renewals. Often the queues for such systems are set up by dividing the surnames by alphabet (say A-M and N-Z) in such a way that both groups have approximately the same arrival rate. The servers are equally capable so the service rates are also equal. My friend's surname is in one group and my surname is in the other group so I must join each line in tandem.  This describes a system with independent queues, with essentially the same arrival and service rates, and I am a special customer as nobody else is likely to have double tasks. 

Our intuition suggests that in order to have the lowest expected total system time for a pair of queues of the type described, the special customer should join the shorter queue first, and upon completion, the customer should then join the other queue (which may have its length changed during the time that the special customer is in the shorter queueing system). The reason for this intuition is that if the special customer joins the shorter queue first, then during the system time (including the customer in service and the special customer) for the short queue, the expected length of the long queue should drop, resulting in a lowering of total system time for the special customer. On the other hand, if  the special customer joins the long queue first, then during the system time for the long queue, the expected length of the short queue should increase, resulting in an increase in total system time for the special customer. This intuition is NOT completely correct and should be examined more closely. We consider three cases. \\
\bigskip

\noindent
Case 1: At time 0, queue 1 length is less than average and queue 2 is longer than average.\\ 
If queue 1 is selected first, then, during the special customer's time in system 1 (including the special customer and the customer in service),  the expected length of queue 2 should get shorter. \\ 
If queue 2 is selected first, then, during the special customer's system time in system 2, the expected length of queue 1 should increase. \\
Thus the shorter queue should be chosen first. 
\bigskip

\noindent 
Case 2: At time 0, queue 1 and queue 2 lengths are both less than average, with queue 1 less than queue 2.\\ 
If queue 1 is selected first, then, during the special customer's queue 1 system time, the expected length of queue 2 should get slightly longer (less time, lower rate). \\ 
If queue 2 is selected first, then, during the special customer's queue 2 system time, the expected length of queue 1 should get noticeably longer (more time, faster rate).\\
Thus the shorter queue should be chosen first (again).
\bigskip 

\noindent
Case 3: Both queue 1 and queue 2 are longer than than average, with queue 1 less than queue 2.\\ 
If queue 1 is selected first, then, during the special customer's queue 1 system time, the expected langth of queue 2 should get shorter (less time, higher rate). \\ 
If queue 2 is selected first, then, during the special customer's queue 2 system time, the expected length of queue 1 should get shorter (more time, lower rate).\\ 
In Case 3, because of the time and rates move in opposite directions, the choice of which queue to select first is not clear.  

Thus, with our more detailed analysis, our intuition suggests possible situations in which the special customer might achieve a shorter expected total time to completion by joining the longer queue first. To find such situations, we should seek values that belong to case 3 and where the change in the rates of drop of the queue length  are similar for both queues during the first queue's system time. In the next section, we give a more rigorous analysis for the expected total time to completion.  

\section{\bf Analysis}
We assume  two $M/M/1$ queueing systems acting independently of each other, reach with exponential interarrival times (arrival rate $\lambda$)  and exponential service times (rate $\mu$). We use the following notation. \\
$L_i(t)=$ queue length (including customer in service, at  time $t$, beginning with $i$ customers at time $0$.\\
$p_{ij}(t)=$ transient probability of an M/M/1 queueing system having $j$ customers at time $t$, given $i$ customers at time $0$.\\  
$EL_{i}(t)=$ the expected number of customers at time $t$ if there are $i$ customers at time 0. 
$T_n(t)=$ time to complete service on $n$ customers. \\
$f_n(t)=$ pdf of the time to service $n$ customers which is Erlang($n$). \\
\begin{equation*}f_{n}(t)=\dfrac{\mu^n t^{n-1}}{(n-1)!}e^{-\mu t}  \end{equation*}

\begin{theorem}A special customer must complete service in either order from two servers, each with its own queue. The two queues are independent M/M/1 with initial lengths $a$ and $b$ respectively and parameters $\lambda$ and $\mu$. The expected total time ($ETT_{ab}$)to complete both queues (with queue ``$a$'' first), is
\begin{equation*}
E(TT_{ab})=\dfrac{a+2}{\mu}+\int_{0}^{\infty}{f_{a+1}(t)\dfrac{1}{\mu}EL_b(t)dt}.
\end{equation*}
\end{theorem}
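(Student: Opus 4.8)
The plan is to split the special customer's total sojourn into its time in queue $a$ followed by its time in queue $b$, handle the first piece directly, and evaluate the second by conditioning on the instant the first piece ends. First I would observe that upon joining queue $a$ the special customer becomes the $(a+1)$-st customer, so under FCFS it departs queue $a$ precisely when $a+1$ services have been completed: the $a$ customers ahead of it plus its own service. Since every service time is exponential with rate $\mu$ and, by the memoryless property, the residual service of the customer already in service at time $0$ is again exponential($\mu$), the queue-$a$ sojourn $T_{a+1}$ is Erlang($a+1$) with density $f_{a+1}$ and mean $(a+1)/\mu$.

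Next I would condition on $\{T_{a+1}=t\}$ and compute the conditional expected sojourn in queue $b$. At the instant $t$ when the special customer transfers, queue $b$ holds $L_b(t)$ customers (including the one in service), so the special customer becomes customer number $L_b(t)+1$. Any arrivals to queue $b$ after time $t$ queue behind the special customer and, under FCFS, are irrelevant to its delay; hence its queue-$b$ sojourn equals the time to clear exactly $L_b(t)+1$ services. Applying memorylessness once more to the service in progress, this sojourn is Erlang($L_b(t)+1$) given the count, with conditional mean $(L_b(t)+1)/\mu$. Averaging over $L_b(t)$ and using the independence of the two systems (so that the queue-$b$ configuration at time $t$ and its subsequent service durations do not depend on the events that generated $T_{a+1}$) gives the conditional mean $(EL_b(t)+1)/\mu$.

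Finally I would integrate against $f_{a+1}$ via the tower property, using that $f_{a+1}$ is a density so that the constant term contributes $1/\mu$:
\begin{equation*}
E(TT_{ab})=\frac{a+1}{\mu}+\int_{0}^{\infty}f_{a+1}(t)\,\frac{EL_b(t)+1}{\mu}\,dt=\frac{a+2}{\mu}+\int_{0}^{\infty}f_{a+1}(t)\,\frac{1}{\mu}\,EL_b(t)\,dt .
\end{equation*}
The only genuinely delicate point, and the step I would treat most carefully, is the conditional independence invoked in the middle paragraph: I must argue that, given $T_{a+1}=t$, the random length $L_b(t)$ together with the future service durations in queue $b$ are independent of the history that produced $T_{a+1}$. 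This is exactly where the assumed independence of the two $M/M/1$ systems combines with the memorylessness of the exponential services to license the factorization of the expectation; it deserves an explicit sentence rather than being taken for granted. Everything else reduces to the elementary Erlang-mean identity and the normalization of $f_{a+1}$, so the remaining manipulations are routine.
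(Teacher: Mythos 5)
Your proposal is correct and follows essentially the same route as the paper's own proof: the paper likewise takes the Erlang$(a+1)$ mean $(a+1)/\mu$ for the first queue, conditions on the exit time $t$ and writes the second-queue mean as $\sum_{n}p_{b,n}(t)\,(n+1)/\mu=(EL_b(t)+1)/\mu$, then absorbs the $1/\mu$ into $(a+2)/\mu$. Your write-up merely makes explicit the memorylessness and independence justifications that the paper leaves implicit.
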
  
 \begin{proof} 
\begin{align*}
E(TT_{ab})&=\dfrac{a+1}{\mu}+\int_{0}^{\infty}{f_{a+1}(t)\sum_{n=0}^{\infty}p_{b,n}(t)\dfrac{n+1}{\mu}dt}\\
&=\dfrac{a+2}{\mu}+\int_{0}^{\infty}{f_{a+1}(t)\dfrac{1}{\mu}EL_{b}(t)dt}.
\end{align*} 
\end{proof}

In order to compute $EL_{b}(t)$ in Theorem 2.1, we use two results presented in van de Coevering (1994)\cite{van}  for transient probabilities and expected values for an M/M/1 queue. 
\begin{theorem} For an M/M/1 queue, 
\begin{equation*}
p_{ij}(t)=\dfrac{2}{\pi}\rho^{(j-i)/2}\int_0^\pi \dfrac{e^{-\mu t \gamma(y)}}{\gamma(y)}a_i(y)a_j(y)\,dy
+ \begin{cases}(1-\rho)\rho^j & \rho<1\\
  0 & \rho \geq 1, \end{cases}
\end{equation*}
where 
\begin{equation*}\gamma(y)=1+\rho-2\sqrt{\rho}\cos(y) \text { and } a_k(y)=\sin(ky)-\sqrt{\rho}\sin ((k+1)y).
\end{equation*}
\end{theorem}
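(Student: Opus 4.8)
The plan is to verify that the function displayed on the right, call it $\tilde p_{ij}(t)$, is the transition function of the M/M/1 queue by checking that it satisfies the forward Kolmogorov (balance) equations together with the initial condition $\tilde p_{ij}(0)=\delta_{ij}$. Since the M/M/1 chain is conservative and non-explosive, these two properties pin down the transition function uniquely, so the identity follows. The engine of the whole argument is the single observation that
\begin{equation*}
a_{k+1}(y)+a_{k-1}(y)=2\cos(y)\,a_k(y),
\end{equation*}
which one reads off in one line from $\sin((k{+}1)y)+\sin((k{-}1)y)=2\cos(y)\sin(ky)$ applied to both terms of $a_k(y)=\sin(ky)-\sqrt\rho\,\sin((k+1)y)$.

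First I would treat the interior states $j\ge 1$, where the forward equation is $\frac{d}{dt}p_{ij}=\lambda p_{i,j-1}-(\lambda+\mu)p_{ij}+\mu p_{i,j+1}$. Differentiating the integral term of $\tilde p_{ij}$ in $t$ brings down a factor $-\mu\gamma(y)$ that cancels the $1/\gamma(y)$ weight; this cancellation is precisely why the kernel carries the factor $1/\gamma(y)$. Writing $h_j(y)=\rho^{\,j/2}a_j(y)$ to absorb the $\rho^{(j-i)/2}$ weights and using $\lambda+\mu-\mu\gamma(y)=2\sqrt{\lambda\mu}\cos(y)$, the balance equation reduces, pointwise in $y$ under the integral, to the three-term recurrence above in the index $j$, so it holds termwise. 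The boundary state $j=0$ is handled separately: there the equation is $\frac{d}{dt}p_{i0}=-\lambda p_{i0}+\mu p_{i1}$, and matching integrands collapses it to the single algebraic identity $\sqrt{\lambda\mu}\,a_1(y)=(\lambda-\mu\gamma(y))\,a_0(y)$, which one checks directly and which shows that the coefficient $-\sqrt\rho$ in the definition of $a_k$ is exactly what encodes the reflecting boundary at state $0$. Finally, the stationary atom $(1-\rho)\rho^j$ present when $\rho<1$ has zero time derivative and independently solves the balance equations, so it can be added without disturbing the check.

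The main obstacle is the initial condition $\tilde p_{ij}(0)=\delta_{ij}$, which is the orthogonality/completeness relation
\begin{equation*}
\frac{2}{\pi}\,\rho^{(j-i)/2}\int_0^\pi \frac{a_i(y)\,a_j(y)}{\gamma(y)}\,dy+\mathbf{1}\{\rho<1\}(1-\rho)\rho^{\,j}=\delta_{ij}.
\end{equation*}
To prove it I would use $\gamma(y)=|1-\sqrt\rho\,e^{iy}|^2$ and, for $\rho<1$, the Poisson-kernel expansion $1/\gamma(y)=(1-\rho)^{-1}\big(1+2\sum_{n\ge 1}\rho^{\,n/2}\cos(ny)\big)$; expanding the product $a_i(y)a_j(y)$ into cosines by product-to-sum formulas and integrating against this kernel via $\int_0^\pi \cos(ky)\cos(ny)\,dy=\tfrac{\pi}{2}\delta_{kn}$ collapses the left side to $\delta_{ij}$ (one can sanity-check the case $i=j=0$, where the integral yields $\rho$ and the atom supplies $1-\rho$). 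The case $\rho\ge 1$, where the atom is absent and the integral alone must reproduce $\delta_{ij}$, lacks this convergent expansion and would be obtained by analytic continuation in $\rho$ or by direct residue evaluation of $\int_0^\pi \cos(ky)/\gamma(y)\,dy$; keeping the index shifts straight through the product-to-sum step is the fussiest part. An alternative, more constructive route is to Laplace-transform in $t$, solve the resulting second-order difference equation in $j$ for the resolvent, and invert the Bromwich integral by collapsing it onto the branch cut $s\in[-\mu(1+\sqrt\rho)^2,-\mu(1-\sqrt\rho)^2]$ (producing the integral, with $1/\gamma$ arising from the resolvent) plus the residue at $s=0$ (producing the atom); there the obstacle shifts to computing the discontinuity across the cut.
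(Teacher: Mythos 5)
The paper does not actually prove this statement: Theorem~2.2 is imported verbatim from van de Coevering (1995), who in turn credits Tak\'acs (1962) and Morse (1955, 1958), and it is used purely as a computational tool for $EL_b(t)$. So your proposal is necessarily a different route, and it must stand on its own. Its skeleton --- verify the Kolmogorov forward equations and the initial condition, then invoke uniqueness --- is a legitimate classical strategy, and your computational identities all check out: $a_{k+1}(y)+a_{k-1}(y)=2\cos(y)a_k(y)$ is correct; combined with $\lambda+\mu-\mu\gamma(y)=2\sqrt{\lambda\mu}\cos(y)$ it does reduce the interior balance equations termwise under the integral (the factor $-\mu\gamma(y)$ produced by the $t$-derivative is exactly what the three-term recurrence yields); the boundary identity $\sqrt{\lambda\mu}\,a_1(y)=(\lambda-\mu\gamma(y))\,a_0(y)$ holds since $a_0=-\sqrt\rho\sin y$ and $a_1=\sin y\,(1-2\sqrt\rho\cos y)$; the stationary atom is annihilated by the balance equations; and the Poisson-kernel verification of $\tilde p_{ij}(0)=\delta_{ij}$ for $\rho<1$ works exactly as you describe (your $i=j=0$ sanity check is right: the integral gives $\rho$, the atom gives $1-\rho$).

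There are, however, two genuine soft spots. First, the suggestion to obtain the $\rho\ge 1$ initial condition ``by analytic continuation in $\rho$'' would fail: viewed as a function of $s=\sqrt\rho$, the integrand of $\int_0^\pi a_i a_j/\gamma\,dy$ has poles at $s=e^{\pm iy}$, which sweep out the unit circle as $y$ ranges over $[0,\pi]$. Hence the integral defines \emph{different} analytic functions inside and outside the circle, and the jump across it is precisely the atom $(1-\rho)\rho^j$; continuing the $\rho<1$ identity outward gives $\delta_{ij}-(1-\rho)\rho^j$, not $\delta_{ij}$. Your fallback --- direct contour/residue evaluation of $\int_0^\pi\cos(ky)/\gamma(y)\,dy$, which is valid for all $\rho\ne 1$ --- is the correct repair; use only that. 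Second, the uniqueness step is stated too casually. The uniqueness theorem for a regular (conservative, non-explosive) $Q$-matrix identifies the transition function uniquely \emph{among nonnegative substochastic solutions}, and nonnegativity of your candidate is anything but evident from an oscillatory integral. The clean fix is to use that M/M/1 has bounded rates, so $Q$ is a bounded operator and Picard--Lindel\"of gives ODE uniqueness in a Banach space: for $\rho<1$ your candidate is $\ell^1$ in $j$ (geometric prefactor $\rho^{j/2}$, uniformly bounded integral), so forward-equation uniqueness in $\ell^1$ applies; for $\rho\ge1$ the $\ell^1$ bound in $j$ is delicate (the prefactor $\rho^{j/2}$ grows, and the integrand's strip of analyticity gives Fourier decay only at the marginal rate $\rho^{-j/2}$), so instead verify the \emph{backward} equations in $i$ --- the same algebra and the same boundary identity --- where the prefactor $\rho^{-i/2}$ makes the candidate bounded, and invoke $\ell^\infty$ uniqueness. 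With these two patches your outline becomes a complete proof; alternatively, your Laplace-transform/branch-cut route \emph{derives} the formula rather than verifying it, and sidesteps the uniqueness-class issue entirely.
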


\begin{theorem} (van de Coevering) For an $M/M/1$ queueing system, let $E_{i}(t)$ be the expected number of customers at time $t$ if there are $i$ customers at time 0. Assume $\rho<1$. Then
\begin{equation*}
EL_{i}(t)=\dfrac{2}{\pi}\rho^{(1-i)/2}\int_0^\pi \dfrac{e^{-\mu t \gamma(y)}}{\gamma(y)^2}a_i(y)\sin(y)\,dy+ \rho/(1-\rho).
\end{equation*}\end{theorem}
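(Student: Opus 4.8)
The plan is to compute $EL_i(t)$ directly from its definition $EL_i(t)=\sum_{j=0}^{\infty} j\,p_{ij}(t)$, substituting the transient probability formula of Theorem 2.2. Because $\rho<1$, that formula splits $p_{ij}(t)$ into an integral (transient) piece and a stationary piece $(1-\rho)\rho^j$, so I would split the mean accordingly as $EL_i(t)=\Sigma_{\mathrm{tr}}+\Sigma_{\mathrm{st}}$ and handle the two pieces separately. The stationary part is immediate: $\Sigma_{\mathrm{st}}=\sum_{j=0}^{\infty} j(1-\rho)\rho^j$ is the mean of a geometric distribution, and differentiating the geometric series gives $\Sigma_{\mathrm{st}}=\rho/(1-\rho)$, which is exactly the additive constant in the claimed formula.

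The transient part is where the work lies. Writing $r=\sqrt{\rho}$ and using $|a_j(y)|\le 1+r$, the series $\sum_j j\,r^j a_j(y)$ converges absolutely and uniformly in $y$ (since $\sum_j j r^j<\infty$ for $r<1$), while $e^{-\mu t\gamma(y)}/\gamma(y)$ stays bounded on $[0,\pi]$; this justifies interchanging sum and integral by Fubini's theorem. After the interchange I am left with $\frac{2}{\pi}\rho^{-i/2}\int_0^\pi \frac{e^{-\mu t\gamma(y)}}{\gamma(y)}a_i(y)\,S(y)\,dy$, where $S(y)=\sum_{j=0}^{\infty} j\,r^j a_j(y)$, and the crux of the proof is evaluating $S(y)$ in closed form.

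To evaluate $S(y)$ I would use the complex-exponential trick: set $z=re^{iy}$ and use $\sum_{j=0}^{\infty} j z^j=z/(1-z)^2$, recognizing $r^j\sin(jy)=\mathrm{Im}(z^j)$ and $r^j\sin((j+1)y)=\mathrm{Im}(e^{iy}z^j)$. Since $a_j(y)=\sin(jy)-r\sin((j+1)y)$, this gives $S(y)=\mathrm{Im}\bigl[\frac{z}{(1-z)^2}(1-re^{iy})\bigr]$, and the identity $1-re^{iy}=1-z$ produces a pleasant cancellation collapsing $S(y)$ to $\mathrm{Im}\,[z/(1-z)]$. Rationalizing the denominator — whose squared modulus $1-2\sqrt{\rho}\cos y+\rho$ is precisely $\gamma(y)$ — then yields $S(y)=r\sin y/\gamma(y)=\sqrt{\rho}\,\sin y/\gamma(y)$.

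Substituting this back closes the argument: the factor $\sqrt{\rho}$ combines with $\rho^{-i/2}$ to give $\rho^{(1-i)/2}$, the two factors of $1/\gamma(y)$ merge into $1/\gamma(y)^2$, and the surviving $\sin y$ is exactly the integrand of the claimed formula, so $\Sigma_{\mathrm{tr}}$ matches the stated integral. I expect the only genuine obstacle to be the bookkeeping in evaluating $S(y)$ — keeping the two imaginary parts and the index shift in $\sin((j+1)y)$ straight — together with a clean statement of the Fubini/dominated-convergence justification for the interchange; everything else is routine once one recognizes $\gamma(y)$ as the squared modulus of $1-z$.
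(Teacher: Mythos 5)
Your proposal is correct, but there is nothing in the paper to compare it against: the paper states this theorem without proof, quoting it from van de Coevering (1995), who in turn credits Takacs (1962) and Morse (1955, 1958). What you have done is supply the missing derivation of Theorem 2.3 from Theorem 2.2, and it checks out. The stationary piece $\sum_j j(1-\rho)\rho^j=\rho/(1-\rho)$ is the geometric mean, as you say. For the transient piece, the interchange of sum and integral is legitimate because $\gamma(y)=1+\rho-2\sqrt{\rho}\cos y\geq(1-\sqrt{\rho})^2>0$ on $[0,\pi]$ when $\rho<1$ (this is the one bound you should state explicitly rather than assert), so $e^{-\mu t\gamma(y)}/\gamma(y)$ is bounded and $\sum_j j r^j|a_j(y)|\leq(1+r)\sum_j jr^j<\infty$ dominates uniformly. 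The heart of the argument, the evaluation
\begin{equation*}
S(y)=\sum_{j=0}^{\infty} j r^j a_j(y)=\mathrm{Im}\!\left[(1-re^{iy})\frac{z}{(1-z)^2}\right]=\mathrm{Im}\,\frac{z}{1-z}=\frac{\sqrt{\rho}\,\sin y}{\gamma(y)},\qquad z=re^{iy},
\end{equation*}
is exactly right: the factor $1-re^{iy}=1-z$ cancels one power of $(1-z)$, and $|1-z|^2=1-2\sqrt{\rho}\cos y+\rho=\gamma(y)$, so the extra $\sqrt{\rho}$ and $1/\gamma(y)$ convert $\rho^{-i/2}$ into $\rho^{(1-i)/2}$ and $1/\gamma(y)$ into $1/\gamma(y)^2$, reproducing the stated formula. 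Since the paper leaves both Theorems 2.2 and 2.3 as imported results, your argument is a genuine addition: it makes Theorem 2.3 a corollary of Theorem 2.2 rather than an independent citation, which is arguably the cleanest way to present it in this paper's framework.
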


Van de Coevering (1995)\cite{van} credits the statements of theorems 2.2 and 2.3 to work of Takacs (1962)\cite{takacs} and Morse (1955, 1958)\cite{morse1},\cite{morse2}.  These two theorems allow us to create a graph of $EL_{i}(t)$ for various initial values of $i$. Graphs of this type appear in Abate and Whitt (1988) and in Yu, He and Zhang (2006)\cite{yu}. 

\newpage

Consider Figure 1 which shows $EL_i(t)$ for $\lambda=3$, $\mu=4$, $0\leq t \leq10$, $i=0,1,\dots, 7$. This diagram is very interesting. All of the curves have the same negative slope at $t=0$. The expected number of customers in the system in steady state is given by $E(L)=\dfrac{\lambda}{\mu-\lambda}$, and equals 3 for 
$\lambda=3$, $\mu=4$. 

\begin{figure}
  \includegraphics[width=4in, height=3in]{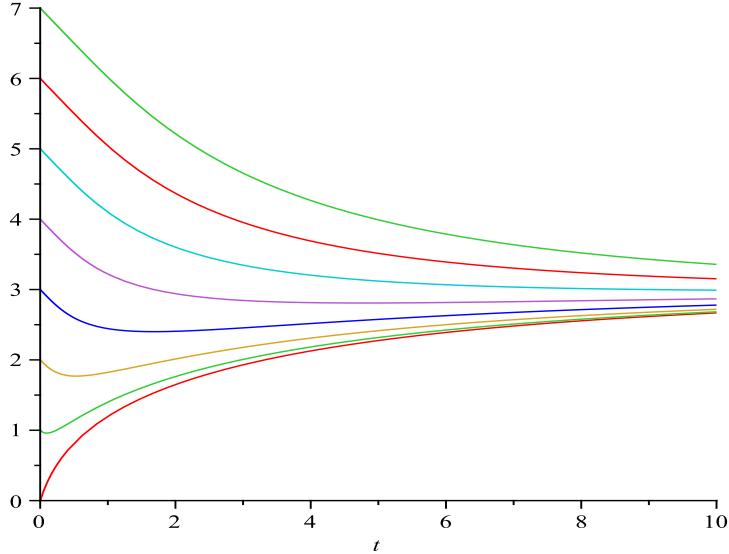}\\
\caption{$EL_{i}(t)$, $i=0,1,2,3,4,5,6,7$, $\lambda=3, \mu=4, 0<t<10$.}
\end{figure}

We observe that for lengths less than the steady state expected number, the curves all decrease before increasing to the steady state value.
Part of this observation  is given in the following result.     

\begin{prop}
For an M/M/1 queue with parameters $\lambda$ and $\mu$, 
\begin{equation*}
\left. \dfrac{dEL_{i}(t)}{dt}\right|_{t=0}=(\lambda-\mu) \text{ for }i=1,2,\dots . 
\end{equation*}
\end{prop}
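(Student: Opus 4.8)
The plan is to avoid differentiating the (somewhat delicate) integral representation of Theorem 2.3 and instead work directly from the infinitesimal description of the M/M/1 system as a birth--death process. Writing $EL_i(t)=\sum_{j=0}^\infty j\,p_{ij}(t)$, I would compute the one-sided derivative at $t=0$ from the short-time behaviour of the transition probabilities $p_{ij}(h)$ as $h\to 0^+$, which is where the generator of the chain enters.

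First I would record the infinitesimal transition rates. Starting from state $i\ge 1$, in a time interval of length $h$ the chain moves to $i+1$ (an arrival) with probability $\lambda h + o(h)$, to $i-1$ (a departure) with probability $\mu h + o(h)$, stays at $i$ with probability $1-(\lambda+\mu)h+o(h)$, and makes two or more jumps with probability $o(h)$. Taking expectations gives
\[
EL_i(h) = (i+1)(\lambda h) + (i-1)(\mu h) + i\bigl(1-(\lambda+\mu)h\bigr) + o(h) = i + (\lambda-\mu)h + o(h).
\]
Subtracting $i=EL_i(0)$, dividing by $h$, and letting $h\to 0^+$ yields $\left.\dfrac{dEL_i(t)}{dt}\right|_{t=0}=\lambda-\mu$ for every $i\ge 1$. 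The restriction $i\ge 1$ is exactly what makes the departure term $\mu h$ available; for $i=0$ no departure is possible and the same computation gives slope $\lambda$ instead, which is why the case $i=0$ is excluded and why all the curves in Figure 1 share this common initial slope.

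The main obstacle is making the $o(h)$ bookkeeping rigorous, since the state space is infinite and a multiple-jump event could in principle move the count far from $i$. This is controlled by the fact that the total jump rate out of any state is uniformly bounded by $\lambda+\mu$: the probability of two or more transitions in $[0,h]$ is $O(h^2)$, and conditioned on the number of transitions the displacement has bounded expectation, so the expected contribution of the multi-jump events is genuinely $o(h)$. The accompanying interchange of derivative and summation is justified (e.g.\ by dominated convergence, using that $EL_i(t)$ is finite for each $t$).

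As an alternative one could differentiate the formula of Theorem 2.3 under the integral sign; at $t=0$ the factor $e^{-\mu t\gamma(y)}$ differentiates to $-\mu\gamma(y)$, cancelling one power of $\gamma(y)$ and reducing the claim to the trigonometric identity $-\frac{2\mu}{\pi}\rho^{(1-i)/2}\int_0^\pi \frac{a_i(y)\sin y}{\gamma(y)}\,dy=\lambda-\mu$. I would avoid this route: evaluating that integral is considerably more work than the probabilistic argument, it obscures why the value is independent of $i$, and it is restricted to the case $\rho<1$ assumed in Theorem 2.3, whereas the generator argument above holds for all $\rho$.
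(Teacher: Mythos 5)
Your proof is correct. Note first that the paper itself states this proposition without any proof at all (it appears immediately after Theorem 2.3 with no accompanying argument, presumably because the authors regarded it as evident from the transient formulas or from the figures), so your generator argument is not an alternative to the paper's proof but a genuine filling of a gap. The argument you give is the natural one: for $i\ge 1$ the one-step expansion $EL_i(h)=(i+1)\lambda h+(i-1)\mu h+i\bigl(1-(\lambda+\mu)h\bigr)+o(h)=i+(\lambda-\mu)h+o(h)$ is sound, and your handling of the error term is adequate --- the number of jumps in $[0,h]$ is dominated by a Poisson variable $N$ with mean $(\lambda+\mu)h$, the displacement is at most $N$, and $E[N\,\mathbf{1}_{\{N\ge 2\}}]=O(h^2)$, so the multi-jump contribution is indeed $o(h)$. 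Your approach also buys two things that differentiating the van de Coevering representation (Theorem 2.3) would not: it makes transparent why the answer is independent of $i$ (the up-rate and down-rate are the same in every state $i\ge 1$), and it requires no assumption $\rho<1$. One small caution on a side remark: you say the exclusion of $i=0$ explains ``why all the curves in Figure 1 share this common initial slope,'' but Figure 1 includes the curve $i=0$, whose initial slope is $+\lambda$ by your own computation; the paper's prose claim that all curves in that figure have the same negative slope at $t=0$ is itself loose on this point, and your proof correctly shows it holds only for $i\ge 1$.
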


Note that for large initial values $i$ at time $0$, the curve $EL_{i}(t)$ looks like a straight line with slope $\mu-\lambda$ before bending as $t$ gets larger.   
\bigskip

\begin{figure}
  \includegraphics[height=4in, width=5.0in]{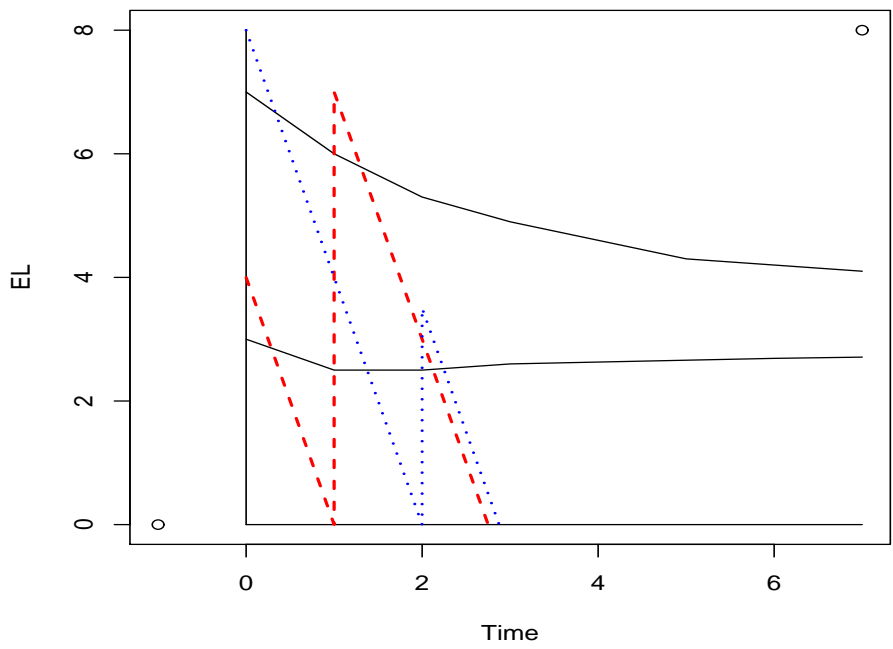}\\
\caption{$ETT_{i}$, $\lambda=3, \mu=4$, $Red (dashes):\, a=3,b=7$; 
 $Blue (dots):\,a=7,b=3$. }
\end{figure}

In Figure 2, the parameters are $\lambda=3$, $\mu=4$, with $a=3$, $b=7$, and the steady state expected length of the queueing system is $EL=\dfrac{\lambda}{\mu-\lambda}=3$. The slope of both curves $EL_{3}(t)$ and $EL_{7}(t)$ at $t=0$ is $\lambda-\mu=-1$. If the special customer chooses to join the queueing system with 3 customers, we can follow what happens on red (dashed) path in Diagram 2. When the special customer joins the 3 customers, there are then 4 customers to be served. The mean service rate is 4 customers so the mean time to complete service on the 4 customers (at rate $\mu=4$) is 1 time unit. At time 1 (approximately), the special customer joins the other queue. The other queue had 7 customers at time $t=0$, but by time $t=1$, the expected number has declined as shown on the graph to approximately 6. Adding the special customer to the 6 customers moves the red path up to approximately 7. The mean time to service these is approximately 7/4, giving a completion time of 1+7/4=2.75. The expression in Theorem 2.1 gives an expected Total Time which is slightly different from this value because it is averaged over all possible cases. 

In Figure 2, we also see what happens to the special customer if it joins the long queue first. We follow the blue (dotted) path in this case. When the special customer joins the 7 customers, there are then 8 customers to be served. The mean service rate is 4 customers so the mean time to complete service on the 8 customers (at rate $\mu=4$) is 2 time units. At time 2 (approximately), the special customer joins the other queue. The other queue had 3 customers at time $t=0$, but by time $t=1$, the expected number has changed but not by much since the curve $EL_{3}(t)$ curve drops and then increases to about 2.6. in the 2 time units.  Adding the special customer moves the blue path up to approximately 3.6. The mean time to service these is approximately 3.6/4, giving a completion time of 2+3.6/4=2.9. 

Thus in this example, the result is what we intuitively expect. Joining the shorter queue first leads to a lower expected total time to completion.      
\newpage

\begin{figure}
  \includegraphics[height=4.0in, width=5.0in]{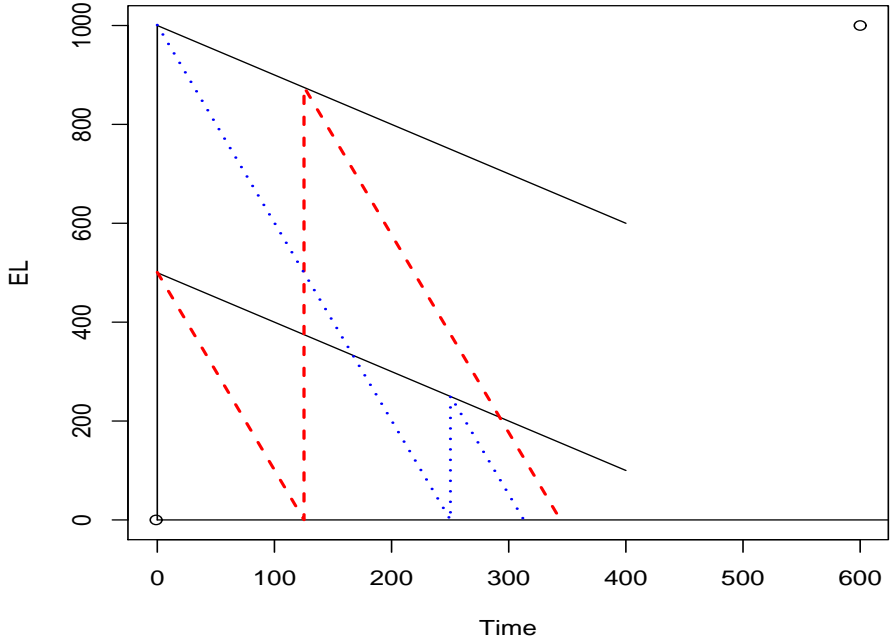}\\
\caption{$ETT_{i}$, $\lambda=3, \mu=4$, $Red (dashes):\, a=500, b=1000$; $Blue (dots):\, a=1000, b=500$.}
\end{figure}

Next we present an example for which it is advantageous to join the longer queue. 
In Figure 3, the parameters are $\lambda=3$, $\mu=4$,  with $a=500$, $b=1000$, and the steady state expected length of the queueing system is $EL=\dfrac{\lambda}{\mu-\lambda}=3$. The slope of both curves $EL_{500}(t)$ and $EL_{1000}(t)$ at $t=0$ is $\lambda-\mu=3-4=-1$, and this stays approximately true for a long time. If the special customer chooses to join the queueing system with 500 customers, we can follow what happens on red (dashed) path in Diagram 3. When the special customer joins the 500 customers, there are then 501 customers to be served. The mean service rate is 4 customers so the mean time to complete service on the 501 customers (at rate $\mu=4$) is 501/4 time units. At time 501/4 (approximately), the special customer joins the other queue. The other queue had 1000 customers at time $t=0$, but by time $t=501/4$, the expected number has declined as shown on the graph to approximately 1000-501/4=874.75. Adding the special customer to 874.75 moves the red path up to approximately 875.75. The mean time to service these is approximately 875.75/4, giving an expected total time to completion of approximately 501/4+875.75/4=344.1875. The expression in Theorem 2.1 would give an expected Total Time slightly different from this value because it is correctly averaged over all possible variables. 

In Figure 3, we also see what happens to the special customer if it joins the long queue first. We follow the blue (dotted) path in this case. When the special customer joins the 1000 customers, there are then 1001 customers to be served. The mean service rate is 4 customers per time unit so the mean time to complete service on the 1001 customers (at rate $\mu=4$) is $1001/4=250.25$ time units. At time 1001/4 (approximately), the special customer joins the other queue. The other queue had 500 customers at time $t=0$, but by time $t=250.25$, the expected number has changed to an average of $500-250.25=249.75$ customers. Adding the special customer moves the blue path up to approximately 250.75. The mean time to service these is approximately 250.75/4, giving an expected total time completion time of $1001/4+250.75/4= 312.9375$ time units. 

Thus in this example, the result is not what we intuitively expect. Joining the shorter queue first leads to a higher expected total time to completion.  

\section{Numerical Results}

The figures in the previous section convincingly show that sometimes the special customer should choose the shorter queue first and sometimes the customer should join the longer queue first. However, the correct 
version of the expected total time to completion is given in Theorem 2.1, whereas the diagrams are only approximate. 

We now present some results using Theorem 2.1 and comment on the results.  The exact expected value which corresponds to Figure 2 follows. \\ 
$\lambda =3$; $\mu=4$; $a=3$; $b=7$; $ETT=2.7607$\\
$\lambda =3$; $\mu=4$; $a=7$; $b=3$; $ETT=2.8560$ \\
We note that to minimize $ETT$, the special customer should join the shorter queue first and the expected value of the total time to completion is 2.7607. 

The result corresponding to Figure 3 is a fairly extreme case. The expected number of customers in the system is 3, yet we are working with $a=500$ and $b=1000$ customers. Such an unusual situation might occur if there is a one time major input into an otherwise pair of stable M/M/1 queues (perhaps an  unexpected busload of customers arrives from out-of-town, temporarily swamping the two queues). 

The reason for the success of the longer queue first strategy is that both $EL_{i}$ curves are decreasing in an approximately parallel way. 

In order to have a more reasonable set of values for $a$ and $b$, yet keeping the reason for the success of the longer queue first strategy , we take parameters  $\lambda =3$, $\mu=4$. We look at the two cases $a=10$, $b=12$ and $a=12$; $b=10$. The results follow:\\ 
$\lambda =3$; $\mu=4$; $a=10$; $b=12$; $ETT=5.3301$ \\
$\lambda =3$; $\mu=4$; $a=12$; $b=10$; $ETT=5.2599$\\

This shows that sometimes under reasonable circumstances, a customer should choose the longer queue first.  We can even make $a$ and $b$ lower and still have the special customer gain by joining the longer queue (although the gain is very small). \\
$\lambda =3$; $\mu=4$; $a=5$; $b=9$; $ETT=3.6354$\\
$\lambda =3$; $\mu=4$; $a=9$; $b=5$; $ETT=3.6254$\\

Both of the above examples fit into Case 3 of Section 2 (both queueing system lengths $a$ and $b$ are longer than the steady state average).  
We surmise that if the steady state expected length $EL$ is large, then the for $i$ near $EL$, the curve $EL_i$ has an intital decrease for a relatively long time before increasing again. Thus it might be possible to find examples belonging to Case 1 and Case 2 where it is also better to join the longer queue first. In order to achieve this, we seek parameters such that $EL$ is fairly large but $\mu-\lambda$ is small. 
We try $\lambda=9$, $\mu=10$. Then $EL=\dfrac{\lambda}{\mu-\lambda}=9$ and $\mu-\lambda=1$. We then choose $a=8,b=9$ and the reverse version $a=9,b=8$. This example is on the border of Case 1 and Case 2. The results are:\\ 
$\lambda =9$; $\mu=10$; $a=8$; $b=9$; $ETT=1.8181$\\
$\lambda =9$; $\mu=10$; $a=9$; $b=8$; $ETT=1.8177$

Again, the special customer has a lower ETT by choosing the longer queue first. If we change $\mu$ by $\pm\epsilon$ where $\epsilon>0$ is very small (say .00000001), this will not affect ETT (rounded to 4 decimal places). However, for $\epsilon >0$, $EL<9$ so the example belongs to Case 2; and for $\epsilon <0$, $EL>9$ so the example belongs to Case 1. thus we have found an example for which the special customer should join the longer queue first even though the shorter queue is shorter than the steady state average and the longer queue is longer than the steady state average! 

\section{Conclusions}
Contrary to intuitive reasoning, it is NOT ALWAYS true that a special customer should join the shorter queue when required to pass through two queues (in either order), even if all parameters (other than current length) are equal. Counterexamples (in extreme cases and fairly standard cases) have been presented. Even if one queueing system exceeds the steady state expected system length and the other is less than the expected length, we have constructed examples for which the special customer should join the longer queue first. 

As a warning, for large values of $a$ and $b$, the calculations can be time consuming on a computer. The calculations appear in the appendix. 

Future research on this subject could involve more than two queues.  Nonexponential distributions amy be added. One time jockeying (or multiple jockeying) could be allowed. It is possible that we will find that examples exist for which a special customer may save time by jockeying to a second queue if the second queue becomes longer! 

Snall amounts of the work appear in the technical report of Molinaro and Hlynka (2009)\cite{molinaro} and Xu (2010)\cite{xu}.

\newpage
\begin{center}
APPENDIX\\
\end{center}
\bigskip
MAPLE CODE\\
\bigskip
Notation:\\ 
g=gamma function as in Theorem 2.1\\
r=rho\\
m=mu\\
a=initial count in queue 1\\
b= initial count in queue 2\\
\bigskip

\noindent
$> a(k,y,r):=sin(k \cdot y)-\sqrt{r} \cdot sin((k+1) \cdot y)$:\\
$> g(y,r):=1+r-2\cdot \sqrt{r} \cdot cos(y):$\\
$>E(i,t,r,m):=\dfrac{2}{\pi}r^{\frac{(1-i)}{2}}\displaystyle\int_{0}^{\pi}\left.\right.\dfrac{e^{-m\cdot t\cdot g(y,r)}}{g(y,r)^2}\cdot a(i,y,r)\cdot sin(y)\,dy+\dfrac{r}{1-r}$:\\
$>ETT(a,b,r,m):=\dfrac{a+2}{m}+\displaystyle\int_{0}^{\infty}\dfrac{m^a}{a!}\cdot t^a \cdot e^{-m\cdot t}\cdot E(b,t,r,m)dt:$\\
$> evalf(ETT(3, 7, .75, 4));$

\qquad 2.760732021\\
$> evalf(ETT(7, 3, .75, 4));$

\qquad 2.856035058

\end{document}